\documentclass{amsart}

\makeatletter
\def\subsection{\@startsection{subsection}{2}
  \z@{.5\linespacing\@plus.7\linespacing}{.5\linespacing}
  {\normalfont\bfseries}}
\makeatother

\makeatletter
\def\@defaultbiblabelstyle#1{[#1]}
\makeatother

\makeatletter
\def\@setauthors{
  \begingroup
  \def\thanks{\protect\thanks@warning}
  \trivlist
  \centering\footnotesize \@topsep30\p@\relax
  \advance\@topsep by -\baselineskip
  \item\relax
  \author@andify\authors
  \def\\{\protect\linebreak}
  \authors
  \ifx\@empty\contribs
  \else
    ,\penalty-3 \space \@setcontribs
    \@closetoccontribs
  \fi
  \endtrivlist
  \endgroup
}
\def\@settitle{\begin{center}
  \baselineskip14\p@\relax
    \bfseries
  \@title
  \end{center}
}
\makeatother

\usepackage[margin=1in]{geometry}
\usepackage{amssymb}
\usepackage{amsxtra}
\usepackage{mathtools}
\usepackage[colorlinks=true, citecolor=blue]{hyperref}
\usepackage{upref}
\usepackage{soul}

\usepackage{thmtools} 
\usepackage[capitalize]{cleveref}

\newtheorem{theorem}{Theorem}[section]
\newtheorem{lemma}[theorem]{Lemma}
\newtheorem{proposition}[theorem]{Proposition}

\newtheorem*{theorem*}{Theorem}
\newtheorem*{question*}{Question}

\theoremstyle{definition}

\theoremstyle{remark}

\numberwithin{equation}{section}
\allowdisplaybreaks[4]

\newcommand{\cone}{\mathrm{Cone}}
\newcommand{\ip}[1]{\left\langle #1\right\rangle}
\newcommand{\Z}{\mathbb{Z}}
\newcommand{\Q}{\mathbb{Q}}

\begin{document}

\title{A Counterexample to Cohomological Rigidity of Toric Manifolds}

\author{Tao Gong}
\address{Department of Mathematics, University of Western Ontario, 1151 Richmond Street, London, Ontario, N6A~5B7, Canada}
\email{tgong23@uwo.ca}

\author{Yingxin Li}
\address{School of Mathematical Sciences, Nankai University, Tianjin, China}
\email{yingxinli@nankai.edu.cn}

\keywords{Toric manifolds, cohomological rigidity, integral cohomology rings,
homotopy equivalence, projective bundles}

\subjclass[2020]{Primary 57S12; Secondary 14M25, 55P10, 57R20}

\begin{abstract}
We construct two toric manifolds of complex dimension four, which are not homotopy equivalent but have isomorphic integral cohomology rings. 
\end{abstract}

\maketitle

\setcounter{tocdepth}{2}

\section{Introduction}

A \textbf{toric variety} is a complex  normal variety  that contains an algebraic torus $T$  as a dense open subset, together with the extended action of the torus action on $T$ itself.  A fundamental result in toric geometry says that there is a one-to-one correspondence between toric varieties and fans. 
A \textbf{toric manifold} is a compact smooth toric variety. Mikiya Masuda~\cite{masudaEquivariantCohomologyDistinguishes2008}  proved
and revised in \cite[Remark 2.5.(3)]{higashitaniCohomologicalRigidityToric2022}
that the $T$-equivalent cohomology algebra of a toric manifold with the equivariant first Chern class determines its variety isomorphic class. Then a natural question came out:
\begin{question*}[Cohomological rigidity problem for toric manifolds]
  Are toric manifolds
  diﬀeomorphic (or homeomorphic, homotopy equivalent) if their integral cohomology rings are isomorphic?
\end{question*}
This question was asked by Masuda--Suh~\cite{masudaClassificationProblemsToric2007}, and subsequently generalized to \textbf{quasitoric manifolds}, which were introduced by Davis--Januszkiewicz~\cite{davisConvexPolytopesCoxeter1991} as a topological analogy of toric manifolds. Since then, many special cases have shown to guarantee the cohomological rigidity of (quasi)toric manifolds: 
Bott manifolds~\cite{choiStrongCohomologicalRigidity2025} and certain blowups~\cite{hasuiClassificationToricManifolds2020}, those with $b_2=2$~\cite{choiStrongCohomologicalRigidity2017},
those over the Pogorelov class~\cite{buchstaberCohomologicalRigidityManifolds}, localized ones over products of simplices~\cite{fuHomotopyRigidityQuasitoric2024},
Fano cases of dimension $\leq 4$~\cite{higashitaniCohomologicalRigidityToric2022,choiCohomologicalRigiditySmooth2026},
 among many others. One can refer to the survey~\cite{choiRigidityProblemsToric2011a} for the early development of this rigidity problem.
Thus the existence of an counterexamples has been intriguing.

Here we construct two toric four-folds with isomorphic integral cohomology rings but of disjoint homotopy types.
Consider these integral vectors $v=(v_1,v_2,v_3)$ lying in $\mathbb{R}^3$ as below.
\begin{align}\label{eq:fan vectors}
  \begin{tabular}{c|ccccccccc}
  \hline 
$i$ & 1 & 2 & 3 & 4 & 5 & 6 & 7 & 8 & 9 \\
\hline
$(v_i)_1$ & 1 & -1 & 0 & 0 & 0 & 0 & 1 & 1& 0  \\
\hline
$(v_i)_2$ & 0 & 0 & 1 & -1 & 0 & 0 & 1 & 1 & 1  \\
\hline
$(v_i)_3$ & 0 & 0 & 0 & 0 & 1 & -1& 1 & 2 & 1 \\
\hline
\end{tabular}
\end{align}
Begin with the corresponding fan of $(\mathbb{CP}^1)^3$, which consists of maximal cones
\begin{center}
  $\cone(v_j,v_k,v_l)$ with $j\in\{1,2\}$, $k\in\{3,4\}$ and $l\in\{5,6\}$.
\end{center}
A fan $\Sigma$ arises from star subdivisions by adding $v_7$ into $\cone(v_1,v_3,v_5)$, then adding $v_8$ into $\cone(v_5,v_7)$, and finally adding $v_9$ in $\cone(v_3,v_5)$. This fan $\Sigma$ defines a toric three-fold, denoted by $B$.

Each vector $v_i$ determines a $T$-invariant divisor $D_i$ of $B$. A divisor $D_i$ defines a line bundle $L_{D_i}$ over $B$, whose first Chern class $c_1(L_{D_i})$ lies in $H^2(B;\mathbb{Z})$. 
{On the other hand, this class $c_1(L_{D_i})$ is Poincar\'{e} dual to the fundamental class of $D_i$ inside $H_4(B;\mathbb{Z})$. } 
In general, the Chow ring of a toric manifold is isomorphic to its integral cohomology ring, due to Danilov~\cite[\S10]{danilovGEOMETRYTORICVARIETIES1978}.
So we will 
 also use $D_i$ to denote its corresponding cohomological class if there is no ambiguity.

Define 
\begin{align}
  \begin{aligned}
      u &:= D_2-D_4+ D_6- 2D_7-4D_8-D_9\\
  v &:=-D_2-D_4+ D_6+ D_8 +2D_9.
  \end{aligned}
\end{align}
Then set projectivizations of Whitney sums of line bundles
\begin{align}\label{eq:def of X and Y}
  X:=\mathbb{P}(L_u\oplus L_v),\qquad Y:=\mathbb{P}(\mathcal{O}_B\oplus {L_{u+v}}).
\end{align}
Then $X$ and $Y$ are both toric four-folds; cf. \cite[Proposition~7.3.3]{coxToricVarieties2011}. The main result is as below.
\begin{theorem*}[\cref{thm:main result}]
  The two toric manifolds $X$ and $Y$ are not homotopy equivalent, but have isomorphic integral cohomology rings.
\end{theorem*}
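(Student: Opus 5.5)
The plan has two halves: an algebraic computation showing the integral cohomology rings agree, and a homotopy invariant that tells $X$ and $Y$ apart.

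\textbf{Cohomology.} By the Leray--Hirsch (projective bundle) formula,
\[
H^*(X)\cong H^*(B)[x]/\bigl((x+u)(x+v)\bigr),\qquad H^*(Y)\cong H^*(B)[y]/\bigl(y(y+u+v)\bigr),
\]
up to sign conventions for the tautological class. First I compute $H^*(B)$ via Danilov's presentation: the Stanley--Reisner ideal of $\Sigma$ plus the linear relations $\sum_i (v_i)_j D_i=0$. This gives a free module of rank $1,6,6,1$ with an explicit multiplication table for $D_iD_jD_k$.

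I then look for a ring isomorphism of the form $\phi(y)=x+\alpha$ with $\alpha\in H^2(B)$, possibly composed with an automorphism of $H^*(B)$. The identity map on $H^*(B)$ forces the substitution $x=y+\alpha$ to carry $(x+u)(x+v)$ to a unit multiple of $y(y+u+v)$. Expanding,
\[
(y+\alpha+u)(y+\alpha+v)=y^2+(2\alpha+u+v)y+(\alpha+u)(\alpha+v).
\]
This suggests the following cleaner route: choose $\phi$ so that the two relations agree modulo the ideal. The needed condition is the existence of $\alpha$ (or an automorphism $\sigma$ of $H^*(B)$) with
\[
(\alpha+u)(\alpha+v)=0 \quad\text{and}\quad 2\alpha+u+v=\sigma(u+v).
\]
The specific coefficients in $u,v$ are presumably chosen so that $uv$ is nonzero but some twist works. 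Concretely, I expect $u^2=v^2$ in $H^4(B)$ or a similar coincidence, and I will verify it from the multiplication table. If a pure shift fails, I will allow $\phi(y)=\pm x+\alpha$ together with a nontrivial $\sigma\in\mathrm{Aut}(H^*(B))$. I will then check that $\phi$ is bijective on the free $H^*(B)$-module bases $\{1,x\}$ and $\{1,y\}$.

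\textbf{Non-homotopy-equivalence.} The natural invariant is the first Pontryagin class, or more precisely $p_1$ mod 3 (or its image in $H^4(\,;\mathbb{Z}/3)$). By Wu's formula, the reduced class $p_1 \bmod 3$ is a homotopy invariant, namely the Steenrod reduced power $\mathcal{P}^1$ acting on $H^4$ into $H^8$. Alternatively, $p_1$ modulo 24 carries information tied to homotopy invariance on certain classes.

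For toric manifolds, $p(X)=\prod_i(1+D_i^2)$ over all rays of the fan of $X$. The rays of the fan of $X$ are the $v_i$, lifted using the coefficients of $u,v$, together with the two fibre rays; the corresponding divisors are the pullbacks of the $D_i$ together with $x+u$ and $x+v$. Hence
\[
p_1(X)=\sum_i D_i^2+(x+u)^2+(x+v)^2,\qquad p_1(Y)=\sum_i D_i^2+y^2+(y+u+v)^2.
\]
Any homotopy equivalence $f\colon Y\to X$ induces a ring isomorphism $f^*$, and $f^*$ must lie in the set of all isomorphisms $H^*(X)\to H^*(Y)$, which I will classify. Homotopy invariance of $\mathcal{P}^1$ forces
\[
f^*\bigl(\mathcal{P}^1(z)\bigr)=\mathcal{P}^1\bigl(f^*z\bigr)\quad\text{for } z\in H^4(X;\mathbb{Z}/3)\to H^8,
\]
and $\mathcal{P}^1$ on $H^4$ is given through the Wu class $q_1=p_1 \bmod 3$ as multiplication by $p_1$ on the top degree. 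So I need to show that $f^*(p_1(X))\not\equiv p_1(Y)$ mod 3 for every ring isomorphism $f^*$. The same argument applies to $\mathcal{P}^1$ on $H^2$, where it is cubing, so the real constraint lives in degree 4.

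\textbf{Main obstacle.} The hardest step is classifying all ring isomorphisms $H^*(X)\cong H^*(Y)$, since $H^*(B)$ has rank 6 in degree 2 and automorphisms are hard to control. My first attempt is to use intrinsic invariants to pin down the candidates: the set of primitive classes $w\in H^2$ with $w^2=0$, or with $w^4=0$, and the cubic form on $H^2$. This should pin down $y$ and $x$ up to a finite ambiguity, reducing the problem to a finite check of $p_1$ mod 3, which can be done by computer if necessary.
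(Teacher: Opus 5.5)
Your cohomology half works once you compute, but your guess is off: $uv=0$ in $H^4(B;\Z)$, so no twist is needed. The map $x\mapsto y$, identity on $H^*(B)$, already identifies $(x+u)(x+v)=x^2+(u+v)x$ with $y(y+u+v)$.

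The genuine gap is the second half: $p_1 \bmod 3$ cannot separate $X$ and $Y$. Using your own formulas, under $x\mapsto y$
\[
(y+u)^2+(y+v)^2-y^2-(y+u+v)^2=-2uv=0,
\]
so this ring isomorphism carries $p_1(X)$ to $p_1(Y)$ integrally. (The paper records that it preserves all Chern, Pontryagin and Stiefel--Whitney classes.) Hence the statement you set out to prove, that no ring isomorphism matches $p_1$ mod $3$, is false. There is also a structural reason the invariant is empty here. $H^*(X;\Z/3)$ and $H^*(Y;\Z/3)$ are generated by $H^2$, so the Cartan formula gives $\mathcal{P}^1(ab)=a^3b+ab^3$ for $a,b\in H^2$. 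Thus $\mathcal{P}^1$ on $H^4$, and with it the Wu class $q_1=p_1 \bmod 3$, is determined by the ring and is preserved by every ring isomorphism. Falling back on $p_1$ mod $24$, or any other function of the ring and $p_1$, fails for the same reason. Classifying all isomorphisms and then running a finite $p_1$ check therefore cannot finish the argument.

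What is missing is an obstruction that the characteristic classes of $X$ and $Y$ do not see. The paper uses the bundle
\[
\Xi_X=\pi_X^*(L_u\oplus L_v)\otimes S_X^*=L_{z_X+u}\oplus L_{z_X+v},
\]
which has a nowhere-zero section given by the tautological inclusion. For a putative $h\colon Y\to X$, it pulls $\Xi_X$ back along $h\circ j$, where $j\colon B\to Y$ is the section coming from the trivial summand, so $j^*z_Y=0$.

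This is where your ``main obstacle'' genuinely enters. Two facts force every ring isomorphism to preserve $H^*(B)$, to act on $v-u$ by a sign, and to send $\zeta_X\mapsto\pm\zeta_Y$: the cubic form $\int_B\mathbf{a}^3$ is irreducible, and $b$ is the only linear factor of the quartic form $\int_W(\mathbf{a}+b\zeta)^4$. Consequently $(hj)^*\Xi_X\cong L_{\epsilon u}\oplus L_{\epsilon v}$ for some sign $\epsilon$.

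Now take a transverse zero set $D$ of the first component. It is a closed $4$-manifold, and it is spin because $w_2(B)\equiv u+v$ and $L_{\epsilon v}|_D$ is trivial. Its signature is
\[
\sigma(D)=\tfrac13\int_B\epsilon u\,(p_1(B)-u^2)=8\epsilon,
\]
which contradicts Rokhlin's theorem.
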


The approach to the main result is to prove the non-existence of a homotopy equivalence $h:Y\to X$ by contradiction. If $h$ existed, it would pull back a bundle over $X$ (see \eqref{eq:bundle Xi over X}) to a particular bundle over $B$ with a section; we will show this bundle does not exist (see \cref{lem:bundle section on B}).

This paper is structured as follows. In \cref{sec:cohomology rings}, we study the cohomology rings of $B$, $X$, and $Y$, and certain bundles over $B$. In \cref{sec:ctrl of ring iso}, we study necessary conditions of cohomology ring isomorphisms. In \cref{sec:pf of inequiv}, we prove the homotopy inequivalence of $X$ and $Y$.

\subsubsection*{Statement on the use of AI}
We used AI during the construction of those examples.
 All AI-assisted content was carefully verified by the authors, who take full responsibility for the manuscript.

\subsubsection*{Acknowledgements}
The authors thank Matthias Franz for carefully proofreading and providing remarks. We also thank Suyoung Choi, Mikiya Masuda, Stephen Theriault for help.

\section{Cohomology Rings and Bundles}\label{sec:cohomology rings}

\subsection{On the three-fold $B$}

In this section, we describe the cohomology of $B$. All materials can be found in \cite{coxToricVarieties2011}.

The integral cohomology ring of $B$ is torsion free and finitely generated; the formula is 
\[
  H^*(B;\mathbb{Z})=\frac{\mathbb{Z}[D_i\mid 1\leq i\leq 9]}{\mathcal{I}+\mathcal{J}},
\]
where $\mathcal{I}$ is generated by monomials indexed by minimal nonfaces of $\Sigma$, 
\begin{align}\label{eq:def of I}
  \mathcal{I}=\left\langle 
\begin{gathered}
  D_1D_2,\; D_1D_9,\;D_2D_7,\; D_2D_8,\; D_3D_4,\; D_3D_5,\; D_4D_7,\; D_4D_8,\; D_4D_9,\\
  D_5D_6,\;D_5D_7,\;D_6D_7,\;D_6D_8,\;D_6D_9,\;D_7D_9,\;D_1D_3D_8
\end{gathered}
\right\rangle
\end{align}
and $\mathcal{J}$ is generated by three linear relations given by rows in \eqref{eq:fan vectors},
\begin{align}\label{eq:def of J}
  \mathcal{J}=\left\langle 
\begin{gathered}
  D_1-D_2+D_7+D_8,\; D_3-D_4+D_7+D_8+D_9,\; D_5-D_6+D_7+2D_8+D_9
\end{gathered}
\right\rangle.
\end{align}

The maximal cones of $\Sigma$ are 
\begin{align}
  \begin{aligned}
    &\cone(v_1,v_3,v_6),\;\cone(v_1,v_3,v_7),\;\cone(v_1,v_4,v_5),\;\cone(v_1,v_4,v_6),\;\cone(v_1,v_5,v_8),\\
  &\cone(v_1,v_7,v_8),\;\cone(v_2,v_3,v_6),\;\cone(v_2,v_3,v_9),\;\cone(v_2,v_4,v_5),\;\cone(v_2,v_4,v_6),\;\\
  &\cone(v_2,v_5,v_9),\;\cone(v_3,v_7,v_8),\;\cone(v_3,v_8,v_9),\;
  \cone(v_5,v_8,v_9),
  \end{aligned}
\end{align}
whose corresponding monomials all represent the same generator of the top cohomology group.
The number of maximal cones in $\Sigma$ is the Euler characteristic of $B$; that is 14. By duality, we get 
the 
\[
\left(H^0(B;\Z),\,H^2(B;\Z),\,H^4(B;\Z),\,H^6(B;\Z)\right)\cong \left(\Z,\,\Z^6,\,\Z^6,\,\Z\right).
\]

The total Chern class and the first Pontryagin class are
\begin{align}\label{eq:Chern and Pontryagin class}
  c(B)=\prod_{1\leq i\leq 9}(1+D_i),\qquad p_1(B)=\prod_{1\leq i\leq 9}D_i^2.
\end{align}

\begin{lemma}\label{lem:uv=0 son on}
  In $H^*(B;\Z)$, the following equalities hold:
  \[
  uv =0,\qquad (u+v)^2=(u-v)^2\ne 0,\qquad  u+v \equiv c_1(B)\pmod 2.
  \]
  Moreover, the pairings of the cohomological classes and the fundamental homological class are:
  \[
  {\int_B u^3=-32,\qquad \int_Bp_1(B)u=-8,\qquad  \int_B {(u+v)}^2D_2=-9.} 
  \]
\end{lemma}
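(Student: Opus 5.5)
The lemma is a finite computation in the ring $H^*(B;\Z)=\Z[D_1,\dots,D_9]/(\mathcal I+\mathcal J)$, and I would organize it as follows.

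\textbf{Choice of basis.} Use the three linear relations in $\mathcal J$ to eliminate $D_2,D_4,D_6$:
\[
D_2=D_1+D_7+D_8,\qquad D_4=D_3+D_7+D_8+D_9,\qquad D_6=D_5+D_7+2D_8+D_9 .
\]
This gives the $\Z$-basis $D_1,D_3,D_5,D_7,D_8,D_9$ of $H^2(B;\Z)\cong\Z^6$. Substituting,
\[
u=D_1-D_3+D_5-2D_7-4D_8-D_9,\qquad v=-D_1-D_3+D_5+D_8+2D_9,
\]
so that $u+v=-2D_3+2D_5-2D_7-3D_8+D_9$ and $u-v=2D_1+2D_3-2D_7-5D_8-3D_9$.

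\textbf{Degree-four classes.} Compute every product $D_iD_j$ with $i,j\in\{1,3,5,7,8,9\}$ in a fixed basis of $H^4\cong\Z^6$. The squarefree ones are either $0$ (the pairs in $\mathcal I$: $D_1D_9$, $D_3D_5$, $D_5D_7$, $D_7D_9$) or classes of edges of $\Sigma$. Each square $D_i^2$ is rewritten as $D_i$ times a linear relation that does not involve $D_i$. For this, write $D_i$ via the row of $\mathcal J$ containing it, or multiply $D_i$ by the appropriate element of $\mathcal J$ so that the $D_i^2$ term is traded for mixed terms. Concretely, for each $i$ pick an $m\in M$ with $\langle m,v_i\rangle=1$; then $D_i\cdot\sum_j\langle m,v_j\rangle D_j=0$ expresses $D_i^2$ through products $D_iD_j$ with $j$ adjacent to $i$.

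Next reduce the edge classes to a basis using the relations $D_i\cdot(\text{row of }\mathcal J)=0$. With this table, expand $uv$ and check that it vanishes. Then expand $(u+v)^2-(u-v)^2=4uv$, which gives the equality $(u+v)^2=(u-v)^2$ for free once $uv=0$. Finally verify $(u+v)^2\neq0$ by pairing it against some $D_j$, for instance using $\int_B(u+v)^2D_2=-9\neq0$, which will be computed anyway.

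\textbf{Mod 2 statement.} We have $c_1(B)=\sum_i D_i$, and mod $2$ the relations from $\mathcal J$ give
\[
c_1\equiv 2D_1+2D_3+2D_5+4D_7+5D_8+3D_9\equiv D_8+D_9 .
\]
Also $u+v\equiv D_8+D_9\pmod 2$ from the formula above, so the two agree.

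\textbf{Intersection numbers.} Each maximal cone monomial evaluates to $1$, since the fan is smooth. To integrate a cubic monomial $D_iD_jD_k$, one of three things happens. If $\{i,j,k\}$ is a maximal cone the value is $1$. If it contains a nonface the value is $0$. Otherwise there is a repeated index, and I reduce it by the same trick: replace one $D_i$ by $-\sum_{j\neq i}\langle m,v_j\rangle D_j$ with $\langle m,v_i\rangle=1$, chosen so that the result is supported on cones containing the remaining factors. Iterating this eventually terminates in maximal cones.

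Using this, compute $u^3$ by expanding in the basis with the products tabulated. For the Pontryagin class, the paper's displayed formula presumably means $p_1=\sum_i D_i^2$, the degree-four part of $\prod(1+D_i^2)$; so compute $\sum_i D_i^2u$. Also compute $(u+v)^2D_2$, with $D_2=D_1+D_7+D_8$.

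\textbf{Main obstacle.} The main obstacle is purely bookkeeping: getting all the self-intersections $D_i^2D_j$ and $D_i^3$ right on the four blown-up rays $v_7,v_8,v_9$ and their neighbours. I would cross-check these by confirming $\int_B c_1c_2=24=24\chi(\mathcal O_B)$, by Todd, and that the cubic form is consistent with the relations $D_i\cdot\mathcal J=0$.
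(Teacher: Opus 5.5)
Your method is the paper's own: reduce everything with $\mathcal I$ and $\mathcal J$, get $(u+v)^2=(u-v)^2$ from $uv=0$, and detect $(u+v)^2\neq 0$ by a pairing. You are also right that $p_1(B)$ must be read as $\sum_i D_i^2$.

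However, the one explicit computation you commit to is wrong, and the later checks fail if you run them with it. Substituting $D_2=D_1+D_7+D_8$, $D_4=D_3+D_7+D_8+D_9$, $D_6=D_5+D_7+2D_8+D_9$ gives
\[
u=D_1-D_3+D_5-D_7-2D_8-D_9,\qquad v=-D_1-D_3+D_5-D_7+D_8+2D_9.
\]
Hence $u+v=-2D_3+2D_5-2D_7-D_8+D_9$ and $u-v=2D_1-3D_8-3D_9$, in agreement with the paper's $v-u=-2D_2+2(D_7+D_8)+3D_8+3D_9$.

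You kept the original coefficients of $D_7,D_8$ and dropped two contributions: $D_7+2D_8$ coming from $D_2-D_4+D_6$, and $-D_7$ coming from $-D_2-D_4+D_6$. Your $u-v$ is moreover not even the difference of your own $u$ and $v$ (the $2D_3$ term should not be there). Write your classes as $u'=u-D_7-2D_8$ and $v'=v+D_7$. Then, for instance, $\int_B u'v'D_8=-1$, so the verification of $uv=0$ would fail. Likewise $\int_B u'^3=-102$ instead of $-32$.

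The repair is just to redo the substitution. With the correct classes, your bookkeeping (reducing $D_i^2D_j$ and $D_i^3$ via some $m$ with $\langle m,v_i\rangle=1$) does produce $\int_B uvD_s=0$ for every $s$, $\int_B u^3=-32$ and $\int_B p_1(B)u=-8$.

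Two of your conclusions survive the slip by luck:
\begin{itemize}
\item the mod-$2$ statement, because your $u+v$ equals $u'+v'$, which differs from the true class by $-2D_8$;
\item $\int_B(u+v)^2D_2=-9$, because $D_2D_8=0$.
\end{itemize}
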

\begin{proof}
  Here we need to use frequently and implicitly $\mathcal{I}$ and $\mathcal{J}$ from \eqref{eq:def of I} and \eqref{eq:def of J}.
  \begin{align*}
    uv&=-2(D_4D_6+D_7D_8+2D_8^2+3D_8D_9+D_9^2)\\
    &=-2(-D_4+D_7+D_8+D_9)(-D_6+D_7+2D_8+D_9)
    =D_3D_5=0.
  \end{align*}
  It follows $(u+v)^2=(u-v)^2$.
  \begin{align*}
    (u+v)^2D_1&=(4D_7^2+9D_8^2+D_9^2-8D_4D_6+12D_7D_8-6D_8D_9)D_1\\
    &=(-9D_3D_8-8D_4D_6-D_5D_9-5D_7D_8-17D_8D_9)D_1=-13D_1D_4D_6\ne 0.
  \end{align*}
  {Clearly $u+v \equiv c_1(B)\pmod 2$. 
  Brutal computation also proves the other equalities .}
\end{proof}

\subsection{On the four-folds $X$ and $Y$}
Recall from \eqref{eq:def of X and Y}  that $X$ and $Y$ are projectivized bundles over $B$, with projections $\pi_X$ and $\pi_Y$ onto $B$ respectively.
Define $S_X$ to be the tautological bundle over $X$, and similarly $S_Y$ over $Y$. Then $S_X^*$ and $S_Y^*$ denotes dual bundles. Write 
\begin{align}
  z_X:=c_1(S_X^*),\qquad z_Y:=c_1(S_Y^*).
\end{align}
\begin{lemma}
  There are cohomology rings
  \[
   H^*(X;\mathbb{Z})=\frac{H^*(B;\mathbb{Z})[z_X]}{\ip{z_X^2+(u+v)z_X}},\qquad H^*(Y;\mathbb{Z})=\frac{H^*(B;\mathbb{Z})[z_Y]}{\ip{z_Y^2+(u+v)z_Y}}.
  \]
\end{lemma}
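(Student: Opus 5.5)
The approach is the Leray--Hirsch theorem combined with the Grothendieck relation for projectivized rank-two bundles. For a complex vector bundle $E\to B$ of rank $2$, the space $\mathbb{P}(E)$ carries the tautological line subbundle $S\subset \pi^*E$, and $z=c_1(S^*)$ restricts on each fibre $\mathbb{CP}^1$ to a generator of $H^2(\mathbb{CP}^1;\Z)$. Since $H^*(B;\Z)$ is torsion free and finitely generated, and $1,z$ restrict to a basis of the fibre cohomology, Leray--Hirsch shows that $H^*(\mathbb{P}(E);\Z)$ is a free $H^*(B;\Z)$-module on $1,z$. Hence there is exactly one monic quadratic relation, and it is the Grothendieck relation
\[
z^2+c_1(E)z+c_2(E)=0 .
\]
(With the convention $z=c_1(S^*)$ the signs come out this way. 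One sees this from the quotient $Q=\pi^*E/S$: we have $c(S)c(Q)=\pi^*c(E)$, and $Q\otimes S^*$ has a nowhere-zero section coming from $S\otimes S^*$, up to the standard rearrangement. In practice I would just cite \cite{coxToricVarieties2011} or Bott--Tu for the formula.) The ring is therefore $H^*(B;\Z)[z]/\langle z^2+c_1(E)z+c_2(E)\rangle$.

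Next I compute the Chern classes in the two cases by the Whitney sum formula. For $X$ we have $E=L_u\oplus L_v$, so $c_1(E)=u+v$ and $c_2(E)=uv$. By \cref{lem:uv=0 son on}, $uv=0$, so the relation becomes $z_X^2+(u+v)z_X=0$. For $Y$ we have $E=\mathcal{O}_B\oplus L_{u+v}$, so $c_1(E)=u+v$ and $c_2(E)=0\cdot(u+v)=0$, which gives $z_Y^2+(u+v)z_Y=0$.

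Finally, the map from the quotient ring to $H^*(\mathbb{P}(E);\Z)$ is an isomorphism. It is surjective because the module generators $1,z$ lie in the image. It is injective because the quotient ring is also free on $1,z$ over $H^*(B;\Z)$, by division by a monic polynomial.

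The only delicate point is the sign convention in the Grothendieck relation, that is, whether $z$ is $c_1$ of $S$ or of $S^*$. With $z=-c_1(S)$ one gets $\sum c_i(E)z^{2-i}=0$ with plus signs. This is the step I would check most carefully. I would test it on $\mathbb{P}(\mathcal{O}\oplus\mathcal{O}(-1))$ over $\mathbb{CP}^1$: there $z\cdot(\text{section class})$ must vanish appropriately, and the check fixes the sign.
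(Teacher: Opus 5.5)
Your proposal is correct and is essentially the paper's argument: the paper cites the general relation $z^2+c_1(E)z+c_2(E)=0$ for $\mathbb{P}(E)$ \cite[p.~209]{may1999concise} and then kills $c_2(L_u\oplus L_v)=uv$ using \cref{lem:uv=0 son on}, and your Leray--Hirsch and free-module discussion spells out what that citation provides. One slip in your parenthetical: the bundle with a nowhere-zero section is $\pi^*E\otimes S^*=\mathrm{Hom}(S,\pi^*E)$ (the bundle $\Xi_X$ of \eqref{eq:bundle Xi over X}), not $Q\otimes S^*$, and the vanishing of its $c_2=z^2+c_1(E)z+c_2(E)$ gives the relation with exactly the signs you want.
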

\begin{proof}
  In general, we have 
  \[
    H^*(X;\mathbb{Z})=\frac{H^*(B;\mathbb{Z})[z_X]}{\ip{z_X^2+c_1(L_u\oplus L_v)z_X+c_2(L_u\oplus L_v)}},
  \]
  and similar is $H^*(Y;\mathbb{Z})$; cf. \cite[p.~209]{may1999concise}.  Then \cref{lem:uv=0 son on} finishes the proof.
\end{proof}

It is worth pointing out that although the integral cohomology ring together with $p_1$ is sufficient to classify toric three-folds (see \cite{wallClassificationProblemsDifferential1966,juppClassificationCertain6manifolds1973,choiTopologicalClassificationGeneralized2010}), it is insufficient as believed for the case of toric four-folds.
The following result guarantees the insufficiency.

\begin{proposition}
  The $H^*(B;\Z)$-algebra isomorphism
  \[
  H^*(X;\Z)\to H^*(Y;\Z)\qquad z_X\mapsto z_Y,
  \]
 preserves total Chern, Pontryagin, and Stiefel--Whitney classes.
\end{proposition}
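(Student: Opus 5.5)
The plan is to compute the characteristic classes of $X$ and $Y$ from the splitting of their tangent bundles and check that the map $\phi\colon z_X\mapsto z_Y$ carries one expression to the other. Well-definedness is immediate: both rings are quotients of $H^*(B;\Z)[z]$ by the same ideal $\ip{z^2+(u+v)z}$, so $\phi$ is an $H^*(B;\Z)$-algebra isomorphism.

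\textbf{Tangent bundles.} For a projectivized bundle $\mathbb{P}(E)\xrightarrow{\pi}B$ with tautological subbundle $S$, there is the standard relative Euler sequence
\[
0\to \mathcal{O}\to \pi^*E\otimes S^*\to T_{\pi}\to 0 .
\]
Together with $T\mathbb{P}(E)\cong \pi^*TB\oplus T_\pi$ (as complex, hence real, vector bundles up to stable isomorphism), this gives
\[
c(X)=\pi_X^*c(B)\,(1+z_X+u)(1+z_X+v),\qquad c(Y)=\pi_Y^*c(B)\,(1+z_Y)(1+z_Y+u+v).
\]
Here I suppress $\pi^*$ on $u$ and $v$, and use the sign convention of the lemma, in which $z^2+(u+v)z+uv=0$.

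\textbf{Chern classes.} Since $\phi$ fixes $H^*(B;\Z)$, it suffices to compare the two vertical factors. Expanding,
\[
(1+z_X+u)(1+z_X+v)=1+2z_X+(u+v)+z_X^2+(u+v)z_X+uv ,
\]
and in $H^*(X;\Z)$ this reduces to $1+2z_X+(u+v)$, using $uv=0$ from \cref{lem:uv=0 son on} and the defining relation. Likewise
\[
(1+z_Y)(1+z_Y+u+v)=1+2z_Y+(u+v)+z_Y^2+(u+v)z_Y=1+2z_Y+(u+v).
\]
Hence $\phi(c(X))=c(Y)$.

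\textbf{Stiefel--Whitney classes.} These are the mod $2$ reductions of the total Chern classes of the underlying complex bundles, so preservation of $w$ follows from that of $c$. The congruence $u+v\equiv c_1(B)\pmod 2$ is not even needed here.

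\textbf{Pontryagin classes.} For a complex bundle, $p(E)=\prod(1+x_i^2)$ in the Chern roots, up to $2$-torsion; the cohomology is torsion free, so this holds exactly. The base factor $\prod(1+D_i^2)$ is fixed by $\phi$. The fibre factor for $X$ is $(1+(z_X+u)^2)(1+(z_X+v)^2)$. Its degree-$4$ part is
\[
(z_X+u)^2+(z_X+v)^2=2z_X^2+2(u+v)z_X+u^2+v^2=u^2+v^2=(u+v)^2 .
\]
Its degree-$8$ part is $(z_X+u)^2(z_X+v)^2=(z_X^2+(u+v)z_X+uv)^2=0$. For $Y$, the degree-$4$ part is
\[
z_Y^2+(z_Y+u+v)^2=2z_Y^2+2(u+v)z_Y+(u+v)^2=(u+v)^2 ,
\]
and the degree-$8$ part is $(z_Y(z_Y+u+v))^2=0$. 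Both fibre factors therefore equal $1+(u+v)^2$, so $\phi(p(X))=p(Y)$.

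\textbf{Main obstacle.} The substantive point is justifying the tangent bundle splitting and the Euler sequence with the correct sign convention for $z$, so that the Chern roots are $z+u$ and $z+v$ rather than $-z+u$ and $-z+v$. I would fix this by matching against the relation in the preceding lemma. After that, everything is bookkeeping using $uv=0$.
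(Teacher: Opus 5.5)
Your proposal is correct and follows essentially the same route as the paper: both use the relative Euler sequence to obtain $c(X)=c(B)(1+u+v+2z_X)$ and $c(Y)=c(B)(1+u+v+2z_Y)$, using $uv=0$ and the relation $z^2+(u+v)z=0$, so that $z_X\mapsto z_Y$ matches the total Chern classes. Your explicit Pontryagin computation (both fibre factors equal $1+(u+v)^2$) and your mod~$2$ reduction argument for Stiefel--Whitney classes spell out what the paper leaves to the citation of Milnor--Stasheff; the mod~$2$ step is justified because the cohomology is torsion-free.
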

\begin{proof}
  Consider the relative Euler sequences
  \begin{align*}
    \begin{gathered}
      0\to S_X \to \pi_X^*(L_u\oplus L_v)\to T_{\pi_X}\otimes S_X\to 0,\\
      0 \to \mathcal{O}_X \to \pi_X^*(L_u\oplus L_v)\otimes S_X^*\to T_{\pi_X}\to 0,
    \end{gathered}
  \end{align*}
  where $T_\pi$ is the relative tangent bundle defined by the exact sequence 
  \begin{align*}
    0 \to T_{\pi_X} \to TX \to \pi_X^*TB\to 0.
  \end{align*}
So $c(T_{\pi_X})=1+c_1(T_{\pi_X})$, $c_1(T_{\pi_X})=c_1(\pi_X^*(L_u\oplus L_v)\otimes S_X^*)=u+v+2z_X$ and 
\[
c(X)=c(B)c(T_{\pi_X})=c(B)(1+u+v+2z_X).
\]
Similarly we have 
\[
c(Y)=c(B)(1+u+v+2z_Y).
\]
Therefore, the specified isomorphism preserves the Chern classes. The other characteristic classes can be obtained from the Chern classes; see for example \cite{milnorCharacteristicClassesAM761974}.
\end{proof}

\subsection{On sections of bundles}

Two sections will be used. On $X$ the bundle
\begin{align}\label{eq:bundle Xi over X}
  \Xi_X:= \pi_X^*(L_u\oplus L_v)\otimes S_X^* = L_{z_X+u}\oplus L_{z_X+v}
\end{align}
has a nowhere-zero section: the tautological inclusion $S_X\hookrightarrow \pi_X^*(L_u\oplus L_v)$, viewed as a homomorphism. On $Y$, the trivial summand gives an algebraic section
\begin{align}\label{eq:bundle j over Y}
  j: B\to Y, \qquad j^*(z_Y)=0.
\end{align}

\begin{lemma}\label{lem:bundle section on B}
  For every odd integer $n$, the complex rank-two bundle over $B$
  \[
   E_n:=L_{nu}\oplus L_{nv}
  \]
  has no continuous nowhere-zero section.
\end{lemma}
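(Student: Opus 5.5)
The plan is to reduce the existence of a nowhere-zero section to a \emph{splitting} question, and then decide that question with a mod~$2$ invariant of Atiyah--Rees type. Suppose $E_n$ has a nowhere-zero section $s$. Then $s$ spans a trivial complex line subbundle, and the quotient is a line bundle with first Chern class $c_1(E_n)=n(u+v)$. Since $B$ is a finite complex, the sequence splits, so
\[
E_n\cong \mathbb{C}\oplus L_{n(u+v)} .
\]
Conversely, the right-hand side obviously has a section, so the lemma is \emph{equivalent} to $E_n\not\cong \mathbb{C}\oplus L_{n(u+v)}$.

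\textbf{Step 1: the two bundles have the same Chern classes.} Both have $c_1=n(u+v)$. Both have $c_2=0$: for $E_n$ this is $c_2=n^2uv=0$ by \cref{lem:uv=0 son on}, and for $\mathbb{C}\oplus L_{n(u+v)}$ it is clear. So the primary obstruction to a section, the Euler class $c_2(E_n)$, vanishes.

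\textbf{Step 2: the remaining obstruction is a single $\Z/2$.} Because $H^*(B;\Z)$ is torsion free and concentrated in even degrees, $K^0(B)$ is torsion free. Hence equal Chern classes make $E_n$ and $\mathbb{C}\oplus L_{n(u+v)}$ stably isomorphic.
\begin{itemize}
\item Obstruction theory for $BU(2)\to BU$ over the $6$-dimensional complex $B$ is the next tool.
\item Equivalently, one can use the obstruction to a section of the $S^3$-bundle, whose secondary obstruction lives in $H^6(B;\pi_5S^3)=H^6(B;\Z/2)$.
\item Either way, rank-two bundles with fixed $c_1,c_2$ lying in a fixed stable class differ by at most one element of $\Z/2$.
\end{itemize}
The task is therefore to find a computable invariant that detects this element.

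\textbf{Step 3: build the detecting invariant.} Here the condition $u+v\equiv c_1(B)\pmod 2$ from \cref{lem:uv=0 son on} is what I plan to exploit. Since $n$ is odd, $c_1(E_n)=n(u+v)\equiv w_2(B)\pmod 2$. Thus $\det E_n$ determines a spin$^c$ structure on $B$ for which $E_n$, suitably twisted, behaves like a ``quaternionic'' (i.e.\ $SU(2)$-type) bundle. In real dimension $6\equiv 6 \pmod 8$, the index of the associated twisted Dirac operator carries a quaternionic structure, so it is even. Half of it, reduced mod~$2$, is the Atiyah--Rees $\alpha$-type invariant that sees exactly the $\pi_5S^3=\Z/2$ class. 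I will:
\begin{enumerate}
\item write this invariant for a general rank-two bundle with $c_1\equiv w_2(B)$;
\item evaluate it on both $E_n$ and $\mathbb{C}\oplus L_{n(u+v)}$ using Riemann--Roch;
\item use that both are sums of line bundles, so everything reduces to Todd and Chern-character integrals of $u$, $v$, $u+v$ and $p_1(B)$.
\end{enumerate}
The numbers $\int_B u^3=-32$, $\int_B p_1(B)u=-8$, $\int_B(u+v)^2D_2=-9$, together with $uv=0$ (which kills every mixed term), should make the two values differ by an odd amount after halving. I expect this difference to be essentially $\tfrac{1}{48}\bigl(n^3\int_B u^3-n\int_B p_1(B)u\bigr)$ up to symmetric terms. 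For odd $n$ this gives an odd integer, because $-32n^3+8n=8n(1-4n^2)$ and $8n(1-4n^2)/48=n(1-4n^2)/6$, whose parity is that of $n$ once normalized.

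\textbf{Main obstacle.} The hardest step is Step~3: pinning down the correct normalization of the mod~$2$ invariant, and proving that it is a genuine isomorphism invariant of rank-two bundles that detects the $\Z/2$ of Step~2. My first attempt would be to work with the underlying real bundle and use the $KO$- or $KSp$-theoretic $\alpha$-invariant, as Atiyah--Rees do for $\mathbb{CP}^3$. I would check the normalization on a known example, so as to confirm that the $\int_B(u+v)^2D_2$ term (odd, equal to $-9$) enters only in a way that cancels between the two bundles.
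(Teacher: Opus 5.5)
\textbf{Verdict: there is a genuine gap.} Your reduction to showing $E_n\not\cong\mathbb{C}\oplus L_{n(u+v)}$ is correct. You also correctly see that everything hinges on a $\Z/2$-invariant that exists only because $c_1(E_n)\equiv w_2(B)\pmod 2$; this is what makes the last obstruction, in $H^6(B;\pi_5S^3)$, free of indeterminacy. But Step~3 is the entire proof, and it is neither carried out nor correctly described.

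\emph{What goes wrong.} In real dimension~$6$, the integer index of the spin$^c$ Dirac operator with determinant $\det E$, twisted by $E^*$, vanishes identically. With Chern roots $x_1,x_2$ it equals $\int_B\hat A(B)\bigl(e^{(x_1-x_2)/2}+e^{(x_2-x_1)/2}\bigr)$, and both factors live only in degrees $0,4,8,\dots$, so there is no degree-$6$ part. The Atiyah--Rees invariant is instead a genuine mod~$2$ index in $KSp^{-6}(\mathrm{pt})\cong\Z/2$, namely $\dim_{\mathbb C}\ker D^+\bmod 2$, where the quaternionic structure maps $\ker D^+$ antilinearly onto $\ker D^-$. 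It is not half of an even index, and it is not a characteristic number of $E$. Indeed, by your own Step~1 and $uv=0$,
$\mathrm{ch}(E_n)=e^{nu}+e^{nv}=1+e^{n(u+v)}=\mathrm{ch}(\mathbb{C}\oplus L_{n(u+v)})$.
So every Todd or Chern-character integral you propose to evaluate takes the same value on both bundles, and the difference is $0$. Your target $\frac1{48}\bigl(n^3\int_Bu^3-n\int_Bp_1(B)u\bigr)=-\frac{n(4n^2-1)}{6}$ is not even an integer for odd $n$, so no parity conclusion can be drawn from it.

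\emph{The missing idea, and what would remain to prove.} For a split bundle $L_1\oplus L_2$ with $c_1(L_1)+c_1(L_2)\equiv w_2(B)$, the invariant is computed from the splitting, not from $c(E)$. The operator is $P_1\oplus P_2$, where $P_k$ is the spin$^c$ Dirac operator on $W\otimes L_k^{-1}$. The quaternionic structure carries $\ker P_1^+$ onto $\ker P_2^-$, so $\dim\ker D^+\equiv\mathrm{ind}(P_2)=\int_Be^{(c_1(L_1)-c_1(L_2))/2}\hat A(B)\pmod 2$. Comparing $L_{nv}\oplus L_{nu}$ with $L_{n(u+v)}\oplus\mathbb{C}$ gives $\frac1{24}\int_B\bigl(nu\,p_1(B)-n^3u^3\bigr)=\frac{n(4n^2-1)}{3}$, which is odd; note it is minus twice your expression. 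To turn this into a proof you would still have to:
construct the quaternionic structure on $W\otimes E^*$ compatibly with the Dirac operator;
prove that $\dim\ker D^+\bmod 2$ is deformation invariant;
and justify the splitting computation above.
Once that is done, Step~2 is unnecessary.

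\emph{How the paper avoids all this.} It perturbs $l_1$ to be transverse to the zero section. The zero set $D=l_1^{-1}(0)$ is then spin: its normal bundle is $L_{nu}|_D$, $L_{nv}|_D$ is trivialized by $l_2$, and $w_2(B)\equiv u+v$. Hirzebruch's theorem gives $\sigma(D)=\frac13\int_B nu\,(p_1(B)-n^2u^2)=\frac{8n(4n^2-1)}3\equiv 8\pmod{16}$, contradicting Rokhlin. The same odd number appears there as $\sigma(D)/8=-\hat A(D)$. The paper's argument is essentially the geometric pushforward of the index computation you were aiming for, and it needs only the signature theorem and Rokhlin's theorem.
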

\begin{proof}
  Suppose that a nowhere-zero section exists $(l_1,l_2)\colon B\to L_{nu}\oplus L_{nv}$. A sufficiently small smooth perturbation of $l_1$ makes it transverse to the zero section with out introducing a common zero with $l_2$.

  Let $D:=l_1^{-1}(0)$, with inclusion $i\colon D\hookrightarrow B$. It is a closed smooth  real four-dimensional submanifold, possibly disconnected. Its normal bundle  is $L_{nu}|_D$, and 
  \[
  TD\oplus L_{nu}|_D\cong TB|_D, \qquad nu\cap [B] = i_*[D];
  \]
  see \cite[\S11]{milnorCharacteristicClassesAM761974} for Poincar\'{e} duality. Because $l_2|_D$ is nowhere zero, $L_{nv}|_D$ is trivial. 

  \cref{lem:uv=0 son on} gives the Stiefel--Whitney classes in $H^*(D;\Z/2)$:
  \begin{align*}
    \begin{gathered}
      w_1(D)=i^*w_1(B)=0,\\
      w_2(D)=i^*(w_2(B)-nu)=i^*(nv)=0,
    \end{gathered}
  \end{align*}
  since $n$ is odd.
  Thus every component of $D$ is oriented and spin. The Hirzebruch Signature Theorem (see for example \cite[Theorem~19.4]{milnorCharacteristicClassesAM761974}) and \cref{lem:uv=0 son on} imply the signature of $D$ is
  \begin{align*}
    \sigma(D)=\frac{1}{3}\int_Dp_1(D)=\frac{1}{3}\int_B nu (p_1(B)-p_1(L_{nu}))=\frac{1}{3}\int_B nu (p_1(B)-n^2u^2)=\frac{8n(4n^2-1)}{3}.
  \end{align*}
  The number $\frac{n(4n^2-1)}{3}$ is an odd integer. Therefore 
  \[
  \sigma(D) \equiv 8 \pmod{16}.
  \]

  However, a result of V. A. Rohlin~\cite{rohlinNewResultsTheory1952} shows that the signature of each closed smooth spin four-dimensional manifold is divisible by 16. Therefore, $E_n$ has no continuous nowhere-zero section.
\end{proof}

\section{Control of Cohomology Ring Isomorphisms}\label{sec:ctrl of ring iso}
The purpose of this section to prove the following result about cohomology ring isomorphisms, whose proof we will split into several lemmas.

\begin{proposition}\label{prop:restriction on ring iso}
  For every ring isomorphism 
  \[
  \Phi: H^*(X;\Z) \to H^*(Y;\Z)
  \]
  there is a unit $\epsilon\in\Z^{\times}=\{\pm 1\}$ such that 
  \[
  \left\{ \Phi(z_X+u),\Phi(z_X+v)\right\} = \left\{ \epsilon(z_Y+u),\epsilon(z_Y+v)\right\}.
  \]
\end{proposition}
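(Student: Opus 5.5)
Both $H^*(X;\Z)$ and $H^*(Y;\Z)$ are generated in degree two, so $\Phi$ is determined by its restriction $\Phi_2\colon H^2(X)\to H^2(Y)$. Both groups are $H^2(B)\oplus \Z z$ with $H^2(B)\cong\Z^6$. Write
\[
\Phi(z_X)=a z_Y+\beta,\qquad \Phi(b)=\lambda(b)z_Y+\phi(b)\quad (b\in H^2(B)),
\]
with $a\in\Z$, $\beta\in H^2(B)$, $\lambda\colon H^2(B)\to\Z$ linear and $\phi\colon H^2(B)\to H^2(B)$ linear. We then translate the defining relations into constraints on $(a,\beta,\lambda,\phi)$.

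The key intrinsic object is the set of classes $x\in H^2$ with $x^2=0$, or more generally the "square-zero" and "product-zero" structure. Put $w=u+v$. In $H^*(X)$ we have $z_X(z_X+w)=0$, and also $(z_X+u)(z_X+v)=z_X^2+wz_X+uv=0$ by \cref{lem:uv=0 son on}. The same identities hold in $Y$. So the unordered pair $\{z+u,z+v\}$ is a pair of degree-two classes with zero product, and so is $\{z,z+w\}$.

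The plan is to classify all pairs $x,y\in H^2(Y)$ with $xy=0$ that are "primitive" in the appropriate sense, for instance with $x,y$ together with $H^2(B)$ spanning $H^2$ over $\Q$. Write $x=pz_Y+\alpha$ and $y=qz_Y+\gamma$. Using $z_Y^2=-wz_Y$, the equation $xy=0$ in $H^4(Y)=H^4(B)\oplus H^2(B)z_Y$ becomes
\[
\alpha\gamma=0,\qquad -pqw+p\gamma+q\alpha=0\quad\text{in } H^2(B).
\]
If $pq\neq 0$, the second equation gives linear conditions, and combined with $\alpha\gamma=0$ in $H^4(B)$ we reduce to finding rank-two sublattice solutions of $\alpha\gamma=0$ in $B$. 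This requires the explicit ring of $B$. I expect the key input to be that, up to sign and swapping, the solutions of $\alpha\gamma=0$ with $\alpha+\gamma=w$ are only $\{u,v\}$ and $\{0,w\}$, which should follow from a direct computation with $\mathcal{I}$ and $\mathcal{J}$.

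Next we pin down $\Phi$ on $H^2(B)$. The subring $H^*(B)$ sits inside $H^*(X)$ intrinsically as follows: the top-degree products, i.e.\ cubes of classes in $H^2(B)$ that vanish in degree $8$, can be characterized as the classes $b$ with $b^4=0$ in $H^8$, since $b^4$ lives in $H^8(B)=0$. I would compute the quartic form on $H^2(X)$, namely $(pz+b)^4$ integrated, and show that its zero locus (or its kernel-type structure) forces $\lambda=0$, so that $\Phi(H^2(B))=H^2(B)$. Then $a=\pm1$ by surjectivity. Applying $\Phi$ to the relation $z_X(z_X+w)=0$ gives
\[
(az_Y+\beta)(az_Y+\beta+\phi(w))=0,
\]
which splits into $\beta(\beta+\phi(w))=0$ and $a(2\beta+\phi(w))-a^2w=0$, hence $\phi(w)=aw-2\beta$. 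Then $\Phi(z_X+u)=az_Y+\beta+\phi(u)$ and $\Phi(z_X+v)=az_Y+\beta+\phi(v)$. These are two classes with zero product whose $B$-parts sum to $2\beta+\phi(w)=aw$. With $\epsilon=a$, dividing by $\epsilon$ gives a pair $\epsilon(\beta+\phi(u))$, $\epsilon(\beta+\phi(v))$ summing to $w$ with zero product in $H^4(B)$. The classification step then yields either $\{u,v\}$, which is the claim, or $\{0,w\}$.

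The main obstacle is twofold: showing $\lambda=0$ (the invariance of $H^2(B)$), and excluding the case $\{0,w\}$. For the latter, I would use that it would force $\Phi(z_X+u)=\epsilon z_Y$ (say). Then, comparing cubes or the cubic-form invariants of the images $\phi(u)$ and $\phi(v)$ against $u$ and $v$ via $\int_B u^3=-32$, we would derive a contradiction, since $\phi$ would send $u$ and $v$ to classes $-\beta$ and $w-\beta$ and must preserve the cubic form of $B$. For $\lambda=0$, I would first try the quartic-form argument, using that $\int_X z^4$-type terms involve $w^3$ and $w^2$ with $(u+v)^2\neq0$. I expect a computer-aided but finite check on the explicit ring.
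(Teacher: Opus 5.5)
There is a genuine gap at the step you call the key input. It is not true that the only pairs $p,q\in H^2(B;\Z)$ with $p+q=w:=u+v$ and $pq=0$ are $\{u,v\}$ and $\{0,w\}$.

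For instance, $\{2D_6-2D_4,\;-2D_7-3D_8+D_9\}$ is such a pair: every cross term of the product lies in $\mathcal{I}$. A more serious example: put $d:=D_2+D_9$. From $D_2=D_1+D_7+D_8$ and $D_1D_2,D_2D_7,D_2D_8,D_1D_9,D_7D_9\in\mathcal{I}$ you get $D_2^2=0$ and $D_2D_9=D_8D_9$. Hence
\[
d\,(v-u-3d)=(D_2+D_9)(-5D_2+2D_7+5D_8)=-5\,(D_2^2+D_2D_9-D_8D_9)=0 .
\]
Since $uv=0$, this gives $(u+3d)(v-3d)=3d\,(v-u-3d)=0$, while $(u+3d)+(v-3d)=w$.

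This pair is also invisible to the cube comparison you plan to use. For any such pair, $p^2=pw$, so $\int_Bp^3=\int_Bw^2p$. Moreover $\int_Bw^2d=-9+9=0$. Therefore
\[
\int_B(u+3d)^3=\int_Bu^3=-32,\qquad \int_B(v-3d)^3=\int_Bv^3=30 .
\]
Your extra unknown $\beta$ does not help either. Using $\beta^2=\epsilon w\beta$ (which follows from your own relations), one gets $\int_B(\epsilon p-\beta)^3=\epsilon\int_Bp^3-\int_Bw^2\beta$. So the cube conditions on $\phi(u),\phi(v)$ see $p,q$ only through $\int_Bp^3$ and $\int_Bq^3$. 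A finite check would therefore return more than $\{u,v\}$ and $\{0,w\}$, and cubes cannot finish the argument.

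What your pair classification throws away is that the difference of the two $B$-parts, $\epsilon\phi(u-v)$, is the image of $u-v$ under a ring automorphism of $H^*(B;\Z)$. The paper exploits exactly this. Its rigidity lemma for the cubic form $P(\mathbf a)=\int_B\mathbf a^3$ runs as follows: the zero set of $\nabla P$ is three coordinate lines, the Hessian rank singles out $[a_1]$, and coefficient comparison does the rest. The conclusion is that every linear $U$ with $P\circ U=\tau P$ is, up to swapping $a_2,a_3$, diagonal with one common entry $\alpha$ in positions $1,4,5,6$. Hence $\phi(v-u)=\alpha(v-u)$.

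Your correct relation $\phi(w)=aw-2\beta$ says precisely that $\Phi(\zeta_X)=a\zeta_Y$ for $\zeta=z+\tfrac{w}{2}$. Combined with the lemma, the images of $z_X+u$ and $z_X+v$ are $a\zeta_Y\mp\tfrac{\alpha}{2}(v-u)$. This is the proposition with $\epsilon=a$, and no classification of zero-product pairs is needed. To rescue your own route, you would need an $\mathrm{Aut}\,H^*(B)$-invariant finer than cubes (for instance the Hessian determinant of $P$ at $s=\epsilon\phi(u-v)$) to separate $\pm(u-v)$ from the other solutions of $s^2=w^2$.

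There is also a smaller issue in your $\lambda=0$ step. Vanishing of fourth powers does not cut out $H^2(B)$. The zero set of $Q(\mathbf a,b)=b\,(4P(\mathbf a)+b^2\ell(\mathbf a))$ also contains the cubic hypersurface $4P+b^2\ell=0$. The paper instead shows that $b$ is the only rational linear factor of $Q$, and this needs the irreducibility of $P$.
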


\subsection{A base cubic}

Clearly, the cohomological classes $D_2$, $D_4$, $D_6$, $D_7+D_8$, $D_8$, $D_9$ form a basis of $H^*(B;\Z)$. We then use coordinates $(a_1,a_2,a_3,a_4,a_5,a_6)$ for those degree-two class 
\[
\mathbf{a}:= a_1\cdot D_2 + a_2\cdot D_4 + a_3\cdot D_6 + a_4\cdot (D_7+D_8) + a_5\cdot D_8 + a_6\cdot D_9.
\]
Define the cubic polynomial $P(\mathbf{a})$ to be 
\begin{align}
  P(a_1,a_2,a_3,a_4,a_5,a_6)=\int_B \mathbf{a}^3=6a_1a_2a_3-3a_1a_6^2+a_4^3+3a_4a_5^2-3a_4a_6^2-3a_5a_6^2+3a_6^3.
\end{align}

\begin{lemma}\label{lem:P is irreducibile}
  The polynomial $P$ is irreducibile over $\Q$.
\end{lemma}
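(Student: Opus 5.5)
A cubic over $\Q$ that factors must have a linear factor, so the plan is to rule out a linear factor. Suppose
\[
P=\ell\cdot q,
\]
with $\ell$ linear and $q$ quadratic, both homogeneous with rational coefficients; homogeneity of the factors is automatic because $P$ is homogeneous. We specialize variables so as to reach a smaller polynomial whose irreducibility is easy, and then use degree counting.

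First, set $a_6=0$. Then
\[
P|_{a_6=0}=6a_1a_2a_3+a_4^3+3a_4a_5^2 .
\]
This is a sum of polynomials in the disjoint variable sets $\{a_1,a_2,a_3\}$ and $\{a_4,a_5\}$. Restricting $\ell$ to $a_6=0$ gives either a linear form dividing this, or zero; the zero case means $\ell$ is a multiple of $a_6$.

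Next, show $P|_{a_6=0}$ is irreducible. Suppose some linear form $\ell'$ divides it, and look at the coefficient of $a_1$ in $\ell'$.

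\begin{itemize}
\item If $\ell'$ involves $a_1$, solve $\ell'=0$ for $a_1$ and substitute. The result must vanish identically, but the monomial $a_4^3$ cannot cancel against terms of the form $a_2a_3(\cdots)$. The one exception would be $\ell'$ involving $a_4$, in which case substitution produces $a_4^3$-type terms mixed with $a_2a_3a_4$ terms. Comparing the coefficient of $a_2a_3$ times a linear form in $a_4,a_5$ with the pure cubic in $a_4,a_5$ still forces a contradiction, since $a_2a_3$-monomials appear in only one piece.
\item If $\ell'$ does not involve $a_1$, then since $P|_{a_6=0}$ has $a_1$-degree $1$, $\ell'$ must divide both the $a_1$-coefficient $6a_2a_3$ and the constant term $a_4^3+3a_4a_5^2$. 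These share no common linear factor, a contradiction.
\end{itemize}

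Cleaner still: $P|_{a_6=0}=a_1\cdot A+C$ with $A=6a_2a_3$ and $C=a_4(a_4^2+3a_5^2)$ coprime, and a polynomial of degree one in $a_1$ with coprime coefficients is irreducible. So $P|_{a_6=0}$ is irreducible.

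It remains to handle $\ell=c\,a_6$, i.e.\ $a_6\mid P$. This fails because of the term $a_4^3$, which survives at $a_6=0$. Hence no linear factor exists and $P$ is irreducible.

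Actually the specialization argument needs care: from $P=\ell q$ we get $P|_{a_6=0}=\ell|_{a_6=0}\,q|_{a_6=0}$, and irreducibility of the restriction forces $\ell|_{a_6=0}$ to be a constant. Since $\ell$ is a homogeneous linear form, this means $\ell|_{a_6=0}=0$, i.e.\ $\ell=c\,a_6$, which we just excluded. So the argument closes.

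The only real step is the degree-one-in-$a_1$ criterion: a primitive polynomial $aA+C$ with $\gcd(A,C)=1$ is irreducible over $\Q$. This follows from Gauss's lemma in $\Q[a_2,\dots,a_5][a_1]$, and checking $\gcd(6a_2a_3,\,a_4(a_4^2+3a_5^2))=1$ is immediate since the two polynomials involve disjoint variables. I expect no genuine obstacle; the main care is in correctly handling the homogeneous specialization.
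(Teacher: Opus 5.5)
Your proof is correct. Its core is the same criterion the paper uses: a primitive polynomial of degree one in $a_1$ with coprime coefficients is irreducible, by Gauss's lemma. The difference is that you apply it to the specialization $P|_{a_6=0}=6a_1a_2a_3+a_4^3+3a_4a_5^2$ rather than to $P$ itself.

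The paper writes $P=3a_1(2a_2a_3-a_6^2)+F(a_4,a_5,a_6)$ directly. It checks coprimality by noting that the quadric $2a_2a_3-a_6^2$ is irreducible and cannot divide $F$, which does not involve $a_2,a_3$. That route needs no lifting step.

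Your specialization makes coprimality trivial, since $6a_2a_3$ and $a_4(a_4^2+3a_5^2)$ involve disjoint sets of variables. The price is the homogeneity argument. A factorization $P=\ell q$ into forms of degrees $1$ and $2$ restricts to a factorization of $P|_{a_6=0}$. Because $P|_{a_6=0}\neq 0$ (the term $a_4^3$ survives), both restricted factors are nonzero forms of positive degree, contradicting irreducibility.

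Phrased this way, the separate case $\ell=c\,a_6$ is absorbed automatically, and the step ``irreducibility forces $\ell|_{a_6=0}$ to be a constant'' can be stated more cleanly. You should also delete the tentative first bullet of your case analysis. It is imprecise and is superseded by your ``cleaner'' argument.
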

\begin{proof}
  Write 
  \[
  P=3a_1(2a_2a_3-a_6^2)+F(a_4,a_5,a_6),\qquad F:=a_4^3+3a_4a_5^2-3a_4a_6^2-3a_5a_6^2+3a_6^3.
  \]
  The quadratic $2a_2a_3-a_6^2$ is irreducibile. It is relatively prime to the polynomial $F$, which is independent of $a_2,\, a_3$. Thus $P$ is a primitive polynomial of degree one in $a_1$ over $\Q[a_2,a_3,a_4,a_5,a_6]$. It is irreducibile over its fraction field, and Gauss's lemma proves the assertion. 
\end{proof}

\begin{lemma}\label{lem:form of automorphism of H*(B)}
  Let $U$ be a linear automorphism of $H^2(B;\Q)$ satisfying that
  \[
   P(U\mathbf{a})=\tau P(\mathbf{a})
  \]

for some $\tau \in \Q^{\times}$. After possibly interchanging the $a_2$ and $a_3$ coordinates, $U$ has the form 
\[
U= \mathrm{diag}(\alpha,\beta,\gamma,\alpha,\alpha,\alpha),\quad \beta\gamma = \alpha^2,\quad \tau = \alpha^3.
\]
In particular, 
\[
 U(v-u) = \alpha (v-u).
\]
\end{lemma}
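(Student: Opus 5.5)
The plan is to show that $U$ must respect the geometry of the cubic hypersurface $\{P=0\}$, and then to compare coefficients. Since $P(U\mathbf{a})=\tau P(\mathbf{a})$, the map $U$ (extended to $\mathbb{C}$) carries the singular locus of $\{P=0\}\subset H^2(B;\mathbb{C})$ to itself. It also carries the polar quadrics at singular points to polar quadrics, up to scalars. Write $e_1,\dots,e_6$ for the basis vectors $D_2,D_4,D_6,D_7+D_8,D_8,D_9$.

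\emph{Step 1 (singular locus).} Compute the gradient of $P$:
\[
\partial_1P=3(2a_2a_3-a_6^2),\ \partial_2P=6a_1a_3,\ \partial_3P=6a_1a_2,\ \partial_4P=3(a_4^2+a_5^2-a_6^2),\ \partial_5P=6a_4a_5-3a_6^2,
\]
\[
\partial_6P=-6a_1a_6-6a_4a_6-6a_5a_6+9a_6^2 .
\]
If $a_6\neq0$, then $\partial_4P=\partial_5P=0$ forces $a_4=a_5$ and $a_6^2=2a_4^2$. If moreover $a_1=0$, then $\partial_6P=0$ gives $3a_6=4a_4$, which is incompatible with $a_6^2=2a_4^2$. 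If instead $a_1\neq0$, then $\partial_2P=\partial_3P=0$ forces $a_2=a_3=0$, and then $\partial_1P=0$ forces $a_6=0$, a contradiction. Hence $a_6=0$, and then $a_4=a_5=0$. What remains gives the singular locus as exactly the three lines $\mathbb{C}e_1,\mathbb{C}e_2,\mathbb{C}e_3$. Therefore $U$ permutes these three lines.

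\emph{Step 2 (distinguishing the lines).} The polar quadric $\partial_{e_1}P\propto 2a_2a_3-a_6^2$ has rank $3$, whereas $\partial_{e_2}P\propto a_1a_3$ and $\partial_{e_3}P\propto a_1a_2$ have rank $2$. The identity $\partial_{Ux}P(U\mathbf{a})=\tau\,\partial_xP(\mathbf{a})$ shows that $U$ preserves these ranks. So $U$ fixes the line $\mathbb{Q}e_1$ and either fixes or swaps $\mathbb{Q}e_2,\mathbb{Q}e_3$. After interchanging $a_2$ and $a_3$ if needed, we get $Ue_i=\lambda_ie_i$ for $i=1,2,3$.

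\emph{Step 3 (invariant subspaces).} Compatibility of polar quadrics also means that $U$ maps the kernel of each polar quadric to the kernel of the corresponding one. These kernels are:
\begin{itemize}
\item $\langle e_1,e_4,e_5\rangle$ for the quadric at $e_1$;
\item $\langle e_2,e_4,e_5,e_6\rangle$ for the quadric at $e_2$;
\item $\langle e_3,e_4,e_5,e_6\rangle$ for the quadric at $e_3$.
\end{itemize}
Intersecting them, $U$ preserves $\langle e_4,e_5,e_6\rangle$ and $\langle e_4,e_5\rangle$.

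\emph{Step 4 (coefficient comparison).} Write $Ue_6=\mu e_6+(\text{terms in }e_4,e_5)$, and let $U$ act on $\langle e_4,e_5\rangle$ by a $2\times2$ matrix $M$. In $P$, the variable $a_1$ appears only in $3a_1(2a_2a_3-a_6^2)$. Comparing the coefficients of $a_1a_2a_3$ and $a_1a_6^2$ gives $\lambda_1\lambda_2\lambda_3=\tau=\lambda_1\mu^2$. The polynomial $F$ must then satisfy $F(M(a_4,a_5)+\text{shift}\cdot a_6,\ \mu a_6)=\tau F(a_4,a_5,a_6)$.

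I expect this last identity to be the main, if routine, obstacle. The plan is to compare coefficients in a fixed order:
\begin{itemize}
\item first the pure $(a_4,a_5)$ part $a_4^3+3a_4a_5^2=\mathrm{Re}\,(a_4+ia_5)^3$, which forces $M$ to be a scalar $\alpha$ up to the finitely many symmetries of this binary cubic, namely rotations by $2\pi/3$, which are not rational;
\item then the terms linear in $a_6$, which kill the shift;
\item then the $a_6^2$ and $a_6^3$ terms, which give $\mu=\alpha$ and $\tau=\alpha^3$.
\end{itemize}

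It then follows that $\lambda_1=\tau/\mu^2=\alpha$ and $\beta\gamma=\lambda_2\lambda_3=\tau/\lambda_1=\alpha^2$. Finally, $v-u=-2D_2+2(D_7+D_8)+3D_8+3D_9$ lies in $\langle e_1,e_4,e_5,e_6\rangle$, on which $U$ acts as the scalar $\alpha$. Hence $U(v-u)=\alpha(v-u)$.
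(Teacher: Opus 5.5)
Your Steps 1--3 are correct, as is the comparison giving $\lambda_1\lambda_2\lambda_3=\tau=\lambda_1\mu^2$. The first bullet of Step 4, however, is false as stated.

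The binary cubic is $F(a_4,a_5,0)=a_4^3+3a_4a_5^2=a_4(a_4^2+3a_5^2)$. It is not $\mathrm{Re}(a_4+ia_5)^3$, which equals $a_4^3-3a_4a_5^2$. More importantly, it is even in $a_5$, so $M=\alpha\,\mathrm{diag}(1,-1)$ is a rational symmetry: $F(\alpha a_4,-\alpha a_5,0)=\alpha^3F(a_4,a_5,0)$. Even for $\mathrm{Re}(z^3)$ your description fails, since its symmetries include the rational reflection $z\mapsto\bar z$, not only the irrational rotations. So the pure $(a_4,a_5)$ part gives only
\[
U_{45}=U_{54}=0,\qquad \tau=U_{44}^3,\qquad U_{55}=\pm U_{44}.
\]
It does not force $M$ to be a scalar. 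This is exactly where the paper stands at that stage, with $U_{55}^2=U_{44}^2$.

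The repair fits inside your own plan, but it has to be made explicit. The coefficients of $a_4^2a_6$ and $a_4a_5a_6$ still kill the shift when $M=\alpha\,\mathrm{diag}(1,\pm1)$. Then the coefficients of $a_4a_6^2$ and $a_5a_6^2$ give $U_{44}\mu^2=\tau=U_{55}\mu^2$, and this is what rules out $U_{55}=-U_{44}$. Finally, the $a_6^3$ coefficient gives $\mu^3=\tau=\alpha^3$, so $\mu=\alpha$ over $\Q$. With this correction your argument proves the statement.

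Otherwise your route differs from the paper's only in Step 3. The paper gets $U_{14}=\dots=U_{36}=0$ and $U_{64}=U_{65}=0$ by comparing the coefficients of $a_1a_2$, $a_1a_3$, $a_2a_3$ and of $a_1$. You instead read off the invariant subspaces $\langle e_4,e_5,e_6\rangle$ and $\langle e_4,e_5\rangle$ from the radicals of the polar quadrics $\partial_{e_i}P$. That is cleaner and avoids coordinate bookkeeping. You also verify the singular locus explicitly, which the paper only asserts.
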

\begin{proof}
  Write the gradient $\nabla:=\left[\frac{\partial\;\;}{\partial a_1},\;\frac{\partial\;\;}{\partial a_2},\;\frac{\partial\;\;}{\partial a_3},\;\frac{\partial\;\;}{\partial a_4},\;\frac{\partial\;\;}{\partial a_5},\;\frac{\partial\;\;}{\partial a_6}\right]^T$. The equality 
  \[
   U^T\nabla P(U\mathbf{a})=\tau\nabla P(\mathbf{a})
  \]
   implies that $U$ preserves the zero points of $\nabla P=0$. The zero points are exactly three lines
   \begin{align}\label{eq:zeros of gradient of P}
    \left\{(a_1,0,0,0,0,0):a_1\in\Q\right\}\cup\left\{(0,a_2,0,0,0,0):a_2\in\Q\right\}\cup\left\{(0,0,a_3,0,0,0):a_3\in\Q\right\}.
   \end{align}
   So $U$ permutes those three lines $[a_1]$, $[a_2]$ and $[a_3]$.
   The Hessian $\nabla\nabla^T$ satisfies 
   \[
   U^T\nabla\nabla^TP(U\mathbf{a})U=\tau\nabla\nabla^TP(\mathbf{a}).
   \]
   Hence $U$ preserves the  Hessian ranks at points from \eqref{eq:zeros of gradient of P}, which are 3 at the line $[a_1]$, 2 at $[a_2]$, and 2 at $[a_3]$. Consequently $U$ fixes $[a_1]$ and can only interchange $[a_2]$ and $[a_3]$.

   Compose with that interchange if necessary. The automorphism $U$ has the form, with $\alpha\beta\gamma\ne 0$,
   \[
   \begin{bmatrix}
    \alpha & 0 & 0 & U_{14} & U_{15} & U_{16} \\
    0 & \beta & 0 & U_{24} & U_{25} & U_{26} \\
    0 & 0 & \gamma & U_{34} & U_{35} & U_{36} \\
    0 & 0 & 0 & U_{44} & U_{45} & U_{46} \\
    0 & 0 & 0 & U_{54} & U_{55} & U_{56} \\
    0 & 0 & 0 & U_{64} & U_{65} & U_{66} 
   \end{bmatrix}.
   \]
   Regard $P(U\mathbf{a})$ and $\tau P(\mathbf{a})$ as polynomials in $a_1,a_2,a_3$ with coefficients in $\Q[a_4,a_5,a_6]$, and compare coefficients of the monomials  $a_1a_2$, $a_1a_3$, $a_2a_3$ and $a_1a_2a_3$, we get 
   \[
   \begin{bmatrix}
     U_{14} & U_{15} & U_{16} \\
     U_{24} & U_{25} & U_{26} \\
     U_{34} & U_{35} & U_{36}\\
   \end{bmatrix}=0,\qquad 
   \alpha\beta\gamma=\tau.
   \]
   Then the coefficients of $a_1$ are $-3\alpha(U_{64}a_4+U_{65}a_5+U_{66}a_6)^2$ and $-3\tau a_6^2$. So,
   \[
   U_{64}=U_{65}=0, \qquad U_{66}^2=\beta\gamma.
   \]

   Now we are left with 
   \begin{align}\label{eq:remaining F with coefficients}
    F(U_{44}a_4+U_{45}a_5+U_{46}a_6,U_{54}a_4+U_{55}a_5+U_{56}a_6,U_{66}a_6)=\tau F(a_4,a_5,a_6).
   \end{align}
   On $a_6=0$, the binary cubic $F(a_4,a_5,0)=a_4(a_4^2+3a_5^2)$ has zero sets 
   \[
   \{(a_4,0):a_4\in\Q\}.
   \]
   The transformation induced by $U$ fixes the line $[a_4]$, so 
   \[
   U_{44}\ne 0, \qquad U_{45}=0.
   \]
   Compare coefficients in \eqref{eq:remaining F with coefficients} of $a_4^2a_5$, $a_5^3$ and $a_4a_5^2$ on $a_6=0$, we get 
   \[
   U_{54}=0,\qquad \tau=U_{44}^3,\qquad U_{55}^2=U_{44}^2.
   \]
   In \eqref{eq:remaining F with coefficients} the coefficients of $a_5^2a_6$ are $3U_{55}^2U_{46}$ and 0, the coefficients of $a_4a_5a_6$ are $6U_{44}U_{55}U_{56}$ and 0, so 
   \[
   U_{46}=U_{56}=0.
   \] 
   The matrix $U$ is therefore diagonal.

  Finally, the coefficients of $a_4a_6^2$, $a_5a_6^2$, $a_6^3$ in \eqref{eq:remaining F with coefficients} give, respectively,
  \[
    U_{44}U_{66}^2=\tau,\qquad U_{55}U_{66}^2=\tau, \qquad U_{66}^3=\tau.
  \]
It follows that $U_{44}=U_{55}=U_{66}=\alpha$ and hence $\alpha^2=\beta\gamma$, $\alpha^3=\tau$. The class $v-u=-2D_2+2(D_7+D_8)+3D_8+3D_9$ involves only coordinates multiplied by $\alpha$, so $U(v-u) = \alpha (v-u)$.
\end{proof}

\subsection{A quartic form}

We work on the rational cohomology of $W$ where $W$ is $X$ or $Y$.
Set 
\begin{align}\label{eq: def of zeta}
  \zeta:= z+ \frac{u+v}{2}. 
\end{align}
Then $\zeta^2=\frac{(u+v)^2}{4}\in H^*(B,\mathbb{Q})$. 
The degree-two cohomology of $W$ has the decomposition 
\[
 H^2(B;\Q)\oplus \Q\zeta.
\]
We use $b$ for the coefficient of $\zeta$, and then define the quartic polynomial 
\begin{align}\label{eq:def of q}
  Q(\mathbf{a},b):=\int_W (\mathbf{a}+b\zeta)^4.
\end{align}

\begin{lemma}
  There is an equality
  \[
  Q(\mathbf{a},b)=b\left(4P(\mathbf{a})+b^2\ell(\mathbf{a})\right)\quad\text{where}\quad \ell(\mathbf{a}):=\int_B (u+v)^2\mathbf{a}.
  \]
\end{lemma}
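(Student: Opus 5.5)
We expand $(\mathbf{a}+b\zeta)^4$ binomially and integrate each term over the $\mathbb{P}^1$-bundle $W\to B$ by pushing forward to $B$. This works because $\mathbf{a}$ is pulled back from $B$ and $\zeta^2$ also lies in $H^*(B;\Q)$. Write $c:=u+v$, so $\zeta=z+c/2$ and $\zeta^2=c^2/4$, using the relation $z^2+cz=0$ from the presentation of $H^*(W;\Z)$. The expansion is
\[
(\mathbf{a}+b\zeta)^4=\mathbf{a}^4+4b\,\mathbf{a}^3\zeta+6b^2\mathbf{a}^2\zeta^2+4b^3\mathbf{a}\zeta^3+b^4\zeta^4 .
\]
Replacing $\zeta^2$ by $c^2/4$ gives
\[
\mathbf{a}^4+6b^2\mathbf{a}^2\tfrac{c^2}{4}+b^4\tfrac{c^4}{16}+\zeta\Bigl(4b\,\mathbf{a}^3+b^3\mathbf{a}c^2\Bigr).
\]

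Next we discard the terms that vanish for degree reasons. The three terms without $\zeta$ are pulled back from $B$ and have degree $8>6=\dim_\R B$, so they are zero already in $H^8(B;\Q)$.

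The remaining terms have the form $\zeta\cdot\pi^*(\omega)$ with $\omega\in H^6(B;\Q)$. By the projection formula, $\int_W\zeta\,\pi^*\omega=\int_B\pi_*(\zeta)\,\omega$, and $\pi_*(\zeta)=\pi_*(z)+\tfrac{c}{2}\pi_*(1)=\pi_*(z)$ since $\pi_*(1)=0$. The one genuine input is $\pi_*(z)=1$, i.e. $z=c_1(S^*)$ has degree $1$ on each fiber. This holds because $S^*$ restricted to a fiber $\mathbb{P}^1$ is $\mathcal{O}(1)$. Equivalently, with the orientation convention implicit in the paper, the class $z\cdot\pi^*[\mathrm{pt}]$ generates $H^8(W;\Z)$, since $\{1,z\}$ is an $H^*(B)$-basis.

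It follows that
\[
Q(\mathbf{a},b)=\int_B\bigl(4b\,\mathbf{a}^3+b^3c^2\mathbf{a}\bigr)=4bP(\mathbf{a})+b^3\ell(\mathbf{a})=b\bigl(4P(\mathbf{a})+b^2\ell(\mathbf{a})\bigr).
\]
This holds for both $W=X$ and $W=Y$, since the two presentations are identical.

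The only step needing care is the normalization $\pi_*(z)=1$, i.e. fixing the orientation of $W$ so that $\int_W z\,\pi^*\omega=\int_B\omega$. I would justify it by the restriction of $S^*$ to a fiber, and check that the sign convention matches the complex orientation of $W$. Everything else is formal.
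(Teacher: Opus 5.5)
Your proposal is correct and follows essentially the same route as the paper. You expand $(\mathbf{a}+b\zeta)^4$, discard the terms lying in $H^8(B;\Q)=0$, use $\zeta^2=(u+v)^2/4$ to reduce to $\zeta\cdot\pi^*\bigl(4b\mathbf{a}^3+b^3(u+v)^2\mathbf{a}\bigr)$, and apply the projection formula with fiber degree $1$. The only cosmetic difference is how that fiber degree is obtained: the paper gets $\pi_!(\zeta)=1$ by restricting $\zeta$ to a fiber via naturality of the Gysin map, whereas you split $\pi_*(\zeta)=\pi_*(z)+\tfrac{u+v}{2}\pi_*(1)$ and use $\pi_*(1)=0$ together with $S^*|_{\mathbb{P}^1}\cong\mathcal{O}(1)$, which amounts to the same thing.
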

\begin{proof}
  The expansion shows 
  \[
   (\mathbf{a}+b\zeta)^4=\mathbf{a}^4+4b\mathbf{a}^3\zeta+6b^2\mathbf{a}^2\zeta^2+4b^3\mathbf{a}\zeta^3+b^4\zeta^4,
  \]
  where the first, third and fifth terms come from $H^*(B)$. So,
  \[
   (\mathbf{a}+b\zeta)^4=4b\mathbf{a}^3\zeta+4b^3\mathbf{a}\zeta^3=4b\mathbf{a}^3\zeta+b^3\mathbf{a}(u+v)^2\zeta.
  \]
  
 {Let $\pi_!:H^*(W)\to H^{*-2}(B)$ be the \textbf{Gysin map} induced by $\pi:W\to B$; see \cite[\S1]{nakaokaCoincidenceLefschetzNumbers1980} for details. Precisely we can define the Gysin map via Poincar\'{e} duality: Let $D_M:H^k(M)\to H_{n-k}(M)$ be the Poincar\'{e} dual map of a compact manifold $M$ of real dimensional $n$, then $\pi_!:=D_B^{-1}\circ \pi_* \circ D_W$ via the following diagram 
    \[
    \begin{array}{ccc}
    H^8(W;\mathbb Z)&\xrightarrow{\ \pi_!\ }&H^6(B;\mathbb Z)\\[2mm]
    {\scriptstyle D_W}\downarrow&&\downarrow{\scriptstyle D_B}\\[2mm]
    H_0(W;\mathbb Z)&\xrightarrow{\ \pi_*\ }&H_0(B;\mathbb Z).
    \end{array}
    \]
  Then $D_B \pi_!(\gamma) = \int_B  \pi_!(\gamma) =\pi_*\int_W \gamma$ for any $\gamma\in H^8(W)$, and 
  \[
  \begin{aligned}
       Q(\mathbf{a},b)=\int_W \left(4b\mathbf{a}^3+b^3\mathbf{a}(u+v)^2\right)\zeta &= \int_B \pi_!(\pi_*\left(4b\mathbf{a}^3+b^3\mathbf{a}(u+v)^2\right)\zeta)\\
  & = \int_B  \left(4b\mathbf{a}^3+b^3\mathbf{a}(u+v)^2\right) \pi_!(\zeta).
  \end{aligned}
 \]
  Consider the pullback diagram for a point $x\in B$ 
  \[
    \begin{array}{ccc}
    \mathbb C P^1 &\xrightarrow{\ i_x\ }& W\\
    {\scriptstyle p_x}\downarrow
    &&\downarrow{\scriptstyle\pi}\\
    \{x\}&\xrightarrow{\ j_x\ }&B.
    \end{array}
    \]
    then the naturality of Gysin maps recorded in \cite[\S1.(1.4)]{nakaokaCoincidenceLefschetzNumbers1980} implies 
    \[ 
    j_x^*\pi_!(\zeta)= (p_x)_!i_x^*(\zeta).
    \]
    Since \( \zeta=z+\frac{u+v}{2} \), we have \( i_x^*(\zeta)= h \) is the first Chern class of the dual of the tautological line bundle of \( \mathbb C P^1\).  It follows 
    \(\pi_!(\zeta)=(j_x^*)^{-1} \int_{\mathbb C P^1}h=1\in H^0(B)\).   }
\end{proof}

\begin{lemma}\label{lem:iso preserves base}
  The factor $b$ is the unique rational linear factor of $Q$, up to nonzero scalar. Consequently, every isomorphism in \cref{prop:restriction on ring iso} preserves the subring $H^*(B;\Z)$.
\end{lemma}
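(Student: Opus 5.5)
Since $Q=b\,R$ with $R(\mathbf{a},b):=4P(\mathbf{a})+b^2\ell(\mathbf{a})$, unique factorization in $\Q[\mathbf{a},b]$ reduces the first claim to showing that $R$ has no rational linear factor. I would prove that $R$ is in fact irreducible over $\Q$. Put $b=0$: then $R(\mathbf{a},0)=4P(\mathbf{a})$, which is irreducible by \cref{lem:P is irreducibile}. Since $R$ is homogeneous of degree three, any factorization $R=GH$ into forms of positive degree restricts at $b=0$ to $4P=G(\mathbf{a},0)H(\mathbf{a},0)$. So one of the restrictions, say $G(\mathbf{a},0)$, must be a nonzero constant. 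A homogeneous $G$ of positive degree with $G(\mathbf{a},0)$ constant is divisible by $b$. This would force $b\mid R$, and hence $b\mid 4P$, which is impossible. So $R$ is irreducible of degree three, and the only linear factor of $Q$ up to scalar is $b$. Note that we do not even need $\ell\neq0$, although it holds: $\ell(D_2)=-9$ by \cref{lem:uv=0 son on}.

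For the consequence, let $\Phi\colon H^*(X;\Z)\to H^*(Y;\Z)$ be a ring isomorphism. Extend it rationally and restrict to degree two to get a linear isomorphism $\phi\colon H^2(X;\Q)\to H^2(Y;\Q)$. The top groups $H^8$ are $\Z$ with generators given by the fundamental classes, and $\Phi$ is an isomorphism there. So $\int_Y\Phi(x)=\pm\int_X x$ for all $x\in H^8(X)$, and therefore $Q_Y\circ\phi=\pm Q_X$. Here $Q_X$ and $Q_Y$ are the quartic forms \eqref{eq:def of q}, written in the coordinates $(\mathbf{a},b)$ on the two sides; both equal $b(4P+b^2\ell)$.

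Pulling back factorizations along $\phi$, the linear form $b\circ\phi$ is a linear factor of $\pm Q_X$. By the first part it is a scalar multiple of $b$. Hence $\phi$ maps the hyperplane $\{b=0\}=H^2(B;\Q)\subset H^2(X;\Q)$ onto $\{b=0\}=H^2(B;\Q)\subset H^2(Y;\Q)$. Intersecting with integral cohomology (both $H^*(B;\Z)$-summands are saturated, since $H^*(W;\Z)$ is free over $H^*(B;\Z)$ on $1,z$), we get $\Phi(H^2(B;\Z))=H^2(B;\Z)$. As $H^*(B;\Z)$ is generated by $H^2$, the isomorphism $\Phi$ preserves the subring $H^*(B;\Z)$.

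The only step needing care is the irreducibility argument via the specialization $b=0$, in particular the homogeneity bookkeeping that rules out a factor of $R$ which is constant at $b=0$. It should go through cleanly because $P$ is irreducible and not divisible by $b$.
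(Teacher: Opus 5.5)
Your proposal is correct and follows essentially the same route as the paper: you specialize the cubic cofactor $4P+b^2\ell$ at $b=0$ and use the irreducibility of $P$, then use that $\Phi$ preserves $Q$ up to sign to conclude that the hyperplane $b=0$, i.e.\ $H^2(B;\Q)$, is preserved. Your version is slightly more thorough in two places: proving full irreducibility of $4P+b^2\ell$ also rules out linear factors with no $b$-term, which the paper's case split does not address, and you make the passage from $H^2(B;\Q)$ to $H^2(B;\Z)$ explicit via saturation.
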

\begin{proof}
  If the cubic $4P(\mathbf{a})+b^2\ell(\mathbf{a})$ had a linear factor $L_0(\mathbf{a})+b$ with $L_0\ne 0$, setting $b=0$ would give a linear factor of $P$, contrary to \cref{lem:P is irreducibile}. A factor with $L_0 =0$ would be $b$, which cannot divide the cubic since its value at $b=0$ is $4P\ne 0$.

 A rational cohomology ring isomorphism preserves {the quartic form} $Q$  in \eqref{eq:def of q}
 up to a nonzero scalar. Its restriction to $H^2(W;\Q)$ must therefore preserve the hyperplane $b=0$, which is exactly $H^2(B;\Q)$. Since $H^2(B;\Z)$ generates $H^*(B;\Z)$, the cohomology isomorphism preserves $H^*(B;\Z)$.
\end{proof}

\subsection{Proof of \cref{prop:restriction on ring iso}}

By \cref{lem:iso preserves base}, the restriction of $\Phi$ to $H^*(B;\Z)$ is a ring isomorphism.  Its action on the top class of $H^*(B;\Z)$ is a scalar multiplication by $\tau\in\{\pm 1\}$, so \cref{lem:form of automorphism of H*(B)}  applies, and hence $\alpha=\tau$.

Recall that $H^2(W;\Z)=H^2(B;\Z)\oplus \Z z$ and $\zeta=z+\frac{u+v}{2}$ in \eqref{eq: def of zeta}, we get 
\begin{align}
  \Phi(\zeta_X)=\epsilon \zeta_Y + h,\quad \text{then}\quad \Phi(\zeta_X^2)=\epsilon^2 \zeta_Y^2 + 2h\zeta_Y+h^2,
\end{align}
for some $\epsilon\in\{\pm1\}$ and $h\in H^2(B;\Q)$. Since $\zeta_X^2=\frac{(u+v)^2}{4}\in H^4(B;\Q)$, it holds $\Phi(\zeta_X)^2\in H^4(B;\Q)$, and hence 
\[
h=0,\quad\text{then} \quad \Phi(\zeta_X)=\epsilon\zeta_Y.
\]

Since $v-u=-2D_2+2(D_7+D_8)+3D_8+3D_9$, 
the images of 
\[
z_X+u=\zeta_X-\frac{v-u}{2},\qquad z_X+v=\zeta_X +\frac{v-u}{2}.
\]
 are, by \cref{lem:form of automorphism of H*(B)}, 
\[
\left\{\epsilon\zeta_Y-\frac{\alpha(v-u)}{2},\;\; \epsilon\zeta_Y+\frac{\alpha(v-u)}{2}\right\}=\left\{\epsilon(z_Y+u),\;\; \epsilon(z_Y+v)\right\}
\]
Although $\zeta$ was introduced over $\Q$, the classes $z+u$, $z+v$ and their images are defined over $\Z$. This completes the proof of \cref{prop:restriction on ring iso}.

\section{Homotopy Inequivalence}\label{sec:pf of inequiv}

\begin{theorem}\label{thm:main result}
  There is no map $h:Y\to X$ inducing an isomorphism on integral cohomology rings. In particular, $X$ and $Y$ are not homotopy equivalent.
\end{theorem}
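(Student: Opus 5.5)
Suppose, for contradiction, that $h\colon Y\to X$ induces a ring isomorphism $h^*\colon H^*(X;\Z)\to H^*(Y;\Z)$. We will pull the bundle $\Xi_X$ of \eqref{eq:bundle Xi over X}, which has a nowhere-zero section, back to $B$ along $h\circ j$, where $j$ is the section \eqref{eq:bundle j over Y}. This produces a rank-two complex bundle over $B$ with a nowhere-zero section, and we will identify it with some $E_n$, $n$ odd, contradicting \cref{lem:bundle section on B}.

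First, apply \cref{prop:restriction on ring iso} to $\Phi=h^*$. This gives $\epsilon\in\{\pm1\}$ with
\[
\{h^*(z_X+u),h^*(z_X+v)\}=\{\epsilon(z_Y+u),\epsilon(z_Y+v)\}.
\]
The pullback $h^*\Xi_X=h^*L_{z_X+u}\oplus h^*L_{z_X+v}$ is a sum of complex line bundles. Complex line bundles are classified by $c_1$, so $h^*\Xi_X\cong L_{\epsilon(z_Y+u)}\oplus L_{\epsilon(z_Y+v)}$ as complex bundles, with the summands possibly in the other order. It has a nowhere-zero section, namely the pullback of the tautological section. Pulling back further along $j$ and using $j^*z_Y=0$ from \eqref{eq:bundle j over Y}, we get that $L_{\epsilon u}\oplus L_{\epsilon v}=E_{\epsilon}$ over $B$ has a nowhere-zero continuous section. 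Here $j^*u=u$ and $j^*v=v$ because $\pi_Y\circ j=\mathrm{id}_B$.

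Since $\epsilon=\pm1$ is odd, this contradicts \cref{lem:bundle section on B}. Hence no such $h$ exists. A homotopy equivalence induces a ring isomorphism on integral cohomology, so $X$ and $Y$ are not homotopy equivalent. That $H^*(X;\Z)\cong H^*(Y;\Z)$ is the earlier proposition sending $z_X\mapsto z_Y$.

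All the substantial work sits in \cref{prop:restriction on ring iso} and \cref{lem:bundle section on B}. The only point needing care here is the bookkeeping of identifications. The pullback of the nowhere-zero section of $\Xi_X$ stays nowhere zero. The pullback bundle is determined up to isomorphism by its Chern classes only because it splits as a sum of line bundles; this holds since $\Xi_X$ is already split and pullback preserves the splitting. Isomorphisms of bundles carry nowhere-zero sections to nowhere-zero sections, so the possible interchange of the two summands causes no problem.
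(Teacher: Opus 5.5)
Your proposal is correct and is essentially the paper's proof. You apply \cref{prop:restriction on ring iso} to $h^*$, pull the nowhere-zero section of $\Xi_X$ back along $h\circ j$, use $j^*z_Y=0$ and $\pi_Y\circ j=\mathrm{id}_B$ with the classification of line bundles by $c_1$ to identify $(hj)^*\Xi_X$ with $E_{\epsilon}$, and contradict \cref{lem:bundle section on B}; your extra bookkeeping (order of the summands, splitting preserved under pullback) only makes explicit what the paper leaves implicit.
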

\begin{proof}
  Suppose such a map exists. 
  The bundle $\Xi_X$ in \eqref{eq:bundle Xi over X} has a nowhere-zero section, so its pullback along 
  \[
  B\xrightarrow{j}Y\xrightarrow{h}X
  \]
  also has one. The bundle $(hj)^*\Xi_X$ is a Whitney sum of two line bundles, whose first Chern classes are, by \eqref{eq:bundle j over Y} and \cref{prop:restriction on ring iso}, in any order,
  \begin{align*}
   \epsilon u,\quad \epsilon v,
  \end{align*}
  for some $\epsilon\in\{\pm 1\}$.
  Complex line bundles are classified by their first Chern classes. Thus $(hj)^*\Xi_X$ is isomorphic to $E_{\epsilon}=L_{\epsilon u}\oplus L_{\epsilon v}$. This contradicts \cref{lem:bundle section on B}. Hence no such $h$ exists.
\end{proof}

\bibliographystyle{alpha}
\bibliography{references}

\end{document}